\numberwithin{equation}{section}
\theoremstyle{definition}
\newtheorem{lemma}{Lemma}[section]
\newtheorem{defi}[lemma]{Definition}
\newtheorem{thm}[lemma]{Theorem}
\newtheorem{rmk}[lemma]{Remark}
\newcommand{\C}{\mathbb{C}}
\newcommand{\N}{\mathbb{N}}
\newcommand{\Z}{\mathbb{Z}}
\begin{document}
	\title{Simple modules of the planar Galilean conformal algebra from tensor products}

    \author[1]{Dashu Xu\thanks{E-mail: dox@mail.ustc.edu.cn}}


	\date{}
	\maketitle
	\begin{abstract}
		This paper is devoted to constructing simple modules of the planar Galilean conformal algebra.
		We study the tensor products of finitely many simple $\mathcal{U}(\mathcal{H})$-free modules with an arbitrary simple restricted module, where $\mathcal{H}$ is the Cartan subalgebra.
		We establish necessary and sufficient conditions for simplicity and determine the corresponding isomorphism classes.

	\bigskip
		
	\noindent {\em Key words:  Restricted module; Simple module; Tensor product}
	\end{abstract}
	
	\section{Introduction}
	The planar Galilean conformal algebra is an infinite-dimensional Lie algebra arising in the field of theoretical physics.
	It was introduced by Bagchi and Gopakumar in \cite{BG} to study a non-relativistic conformal symmetry.
	Of particular algebraic interest are the structure and representation theory of the planar Galilean conformal algebra.
	Various types of derivations, automorphism groups, left symmetric algebra structures and universal central extensions of the planar Galilean conformal algebra have been explicitly determined in a series of papers\cite{CH,CSY,GLP,TZ,CS,JSL}.  
	Beyond the structural perspective, representation theory concentrates on studying different types of modules over the planar Galilean conformal algebra.
	Weight modules are studied in \cite{A,AK}, where the Kac determinant for a highest weight Verma module is given explicitly. 
	Under the framework of $(d,\sigma)$-twisted affine-Virasoro superalgebras, simple quasi-finite modules are classified in \cite{LYZ}.
	Regarding non-weight representations, simple Whittaker and $\mathcal{U}(\mathcal{H})$-free modules are classified in \cite{CYY,CGZ1}.
	The planar Galilean conformal algebra admits a $\Z$-gradation, which induces the notion of restricted modules.
	In the spirit of \cite{MZ}, the authors of \cite{GG,CY} constructed a large family of simple restricted modules and characterize them conceptually.
	However, the problem of completely classifying simple restricted modules has not yet been solved.
	
	The tensor product construction provides an effective manner for expanding simple modules from known ones of a given Lie algebra.
	For example, the tensor products of simple Virasoro algebra modules generate numerous new simple modules, see \cite{GLZ,CGZ,LGW,OW,TanZ} and the references therein.
	In this paper, we study the tensor products of finitely many simple $\mathcal{U}(\mathcal{H})$-free modules and an arbitrary simple restricted module.
	This work is motivated by previous studies on the representation theory of the Virasoro algebra\cite{TanZ,TZhao} along with investigations into other related algebras\cite{CGZ2,CY1}.
	We give necessary and sufficient conditions for such a tensor product module to be simple, and subsequently determine their isomorphism classes.
	The \textbf{main results} of this paper are presented in Theorems \ref{simple} and \ref{iso}.
	
	Throughout this paper, we use $\N$, $\Z$, $\C$ and $\C^*$ to denote the sets of non-negative integers, integers, complex numbers and non-zero complex numbers respectively.
	Additionally, for any arbitrary Lie algebra $\mathfrak{g}$, we employ $\mathcal{U}(\mathfrak{g})$ to denote its universal enveloping algebra.
\section{Preliminaries}

    First, let us recall the definition of the planar Galilean conformal algebra.
    \begin{defi}
    	The planar Galilean conformal algebra, denoted by $\mathcal{G}$, is an infinite-dimensional Lie algebra, which has a $\C$-basis $\left\{L_n,H_n,I_n,J_n\mid n\in\Z\right\}$ with the following non-trivial Lie brackets:
    	\begin{align*}
    		[L_m,L_n]&=(n-m)L_{m+n},\quad [L_m,H_n]=nH_{m+n},\quad [L_m,I_n]=(n-m)I_{m+n},\\
    		[L_m,J_n]&=(n-m)J_{m+n},\quad [H_m,I_n]=I_{m+n},\quad [H_m,J_n]=-J_{m+n}. 
    	\end{align*}
    \end{defi}
    For $n\in\Z$, denote by $\mathcal{G}_n=\C L_n\oplus\C H_n\oplus\C I_n\oplus\C J_n$.
    Then, we have a decomposition
    \[
    \mathcal{G}=\bigoplus_{n\in\Z}\mathcal{G}_n,
    \]
    and for arbitrary $m,n\in\Z$, the commutator $[\mathcal{G}_m,\mathcal{G}_n]$ is contained in $\mathcal{G}_{m+n}$.
    It follows that $\mathcal{G}$ is a $\Z$-graded Lie algebra.
    \begin{defi}
    	A $\mathcal{G}$-module $V$ is called a restricted module provided for any $v\in V$, there exists an integer $i\in\N$ such that $\mathcal{G}_nv=0$ for $n\ge i$.
    \end{defi}
    
    For $\eta\in\C$ and $\lambda,\sigma\in\C^*$, denote by $\Omega(\lambda,\sigma,\eta)=\C[s,t]$ and $\Gamma(\lambda,\sigma,\eta)=\C[x,y]$, respectively. 
    Then, by Definition 4.3 in \cite{CGZ1}, $\Omega(\lambda,\sigma,\eta)$ and $\Gamma(\lambda,\sigma,\eta)$ admit the structures of $\mathcal{G}$-modules under the following actions:
    \begin{equation}
    	\begin{split}
    		\Omega(\lambda,\sigma,\eta)=\C[s,t]:\quad L_nf(s,t)&=\lambda^n(s+nt+n\eta)f(s-n,t),\\
    		H_nf(s,t)&=\lambda^n tf(s-n,t),\\
    		I_nf(s,t)&=\lambda^n\sigma f(s-n,t-1),\\
    		J_nf(s,t)&=0,
    	\end{split}
    \end{equation}
    where $f(s,t)\in \C[s,t]$,
    \begin{equation}
    	\begin{split}
    		\Gamma(\lambda,\sigma,\eta)=\C[x,y]:\quad L_ng(x,y)&=\lambda^n(x+ny+n\eta)g(x-n,y),\\
    		H_ng(x,y)&=\lambda^n yg(x-n,y),\\
    		I_ng(x,y)&=0,\\
    		J_ng(x,y)&=\lambda^n\sigma g(x-n,t+1),
    	\end{split}
    \end{equation}
    where $g(x,y)\in\C[x,y]$.
    
    Denote by $\mathcal{H}=\C L_0\oplus\C H_0$.
    Combining Theorems 4.12, 4.14 and 4.15 in \cite{CGZ1}, the following theorem is valid.
    \begin{thm}
     Suppose $M$ is a simple $\mathcal{G}$-module 
    that is free of rank one when restricted to $\mathcal{U}(\mathcal{H})$.
    Then $M$ is either isomorphic to $\Omega(\lambda,\sigma,\eta)$ or $\Gamma(\lambda,\sigma,\eta)$.
	\end{thm}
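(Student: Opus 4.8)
The plan is to convert the abstract freeness hypothesis into an explicit polynomial model and then to read off the action of every mode from the bracket relations. Since $[L_0,H_0]=0$, the Cartan $\mathcal{H}$ is abelian and $\mathcal{U}(\mathcal{H})\cong\C[L_0,H_0]$ is a commutative polynomial ring in two variables; rank-one freeness lets me fix a cyclic generator and identify $M=\C[s,t]$ as a vector space so that $L_0$ and $H_0$ act as multiplication by $s$ and $t$ respectively. I record the $\mathcal{G}_0$-weights $\epsilon_L=\epsilon_H=0$, $\epsilon_I=-1$, $\epsilon_J=+1$ coming from $[H_0,L_n]=0$, $[H_0,H_n]=0$, $[H_0,I_n]=I_n$ and $[H_0,J_n]=-J_n$.

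My first step is to show that each mode acts as a shift operator. For $X\in\{L,H,I,J\}$ the relations $[L_0,X_n]=nX_n$ and $[H_0,X_n]=\epsilon_X X_n$ become, at the operator level on $\C[s,t]$, the identities $X_n(sf)=(s-n)X_n(f)$ and $X_n(tf)=(t+\epsilon_X)X_n(f)$. Iterating these on monomials gives $X_n(s^k t^l)=(s-n)^k(t+\epsilon_X)^l\,X_n(1)$, whence $X_n f=\varphi^X_n(s,t)\,f(s-n,\,t+\epsilon_X)$, where the single ``symbol'' $\varphi^X_n:=X_n\cdot 1\in\C[s,t]$ determines $X_n$ completely. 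Thus the whole module is encoded in the four families of symbols, normalized by $\varphi^L_0=s$ and $\varphi^H_0=t$.

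Next I would translate the remaining brackets into functional equations for the symbols by composing shift operators. For instance $[L_m,L_n]=(n-m)L_{m+n}$ becomes $\varphi^L_m(s,t)\varphi^L_n(s-m,t)-\varphi^L_n(s,t)\varphi^L_m(s-n,t)=(n-m)\varphi^L_{m+n}(s,t)$, and the brackets involving $H$, $I$ and $J$ yield an analogous coupled system once the $t$-shifts $\epsilon_X$ are carried through the compositions. Solving this system with the normalizations above is the technical core: a degree analysis in $s,t$ combined with the recursive/multiplicative structure in the mode index forces an exponential dependence on $n$, producing $\varphi^L_n=\lambda^n(s+nt+n\eta)$ and $\varphi^H_n=\lambda^n t$ for some $\eta\in\C$ and $\lambda\in\C^*$ (the presence of the negative modes is what forces $\lambda\neq 0$), while the $I$- and $J$-symbols are pinned down to constant multiples of $\lambda^n$, say $\varphi^I_n=\lambda^n\theta$ and $\varphi^J_n=\lambda^n\xi$.

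Finally I would invoke simplicity to conclude. The remaining brackets among the $I$'s and $J$'s constrain the scalars $\theta,\xi$, and a submodule argument excludes the degenerate case: if $\theta=\xi=0$ then no mode can move the variable $t$, so $t\,\C[s,t]$ is a proper nonzero submodule, contradicting simplicity; hence at least one of $\theta,\xi$ is nonzero. A short analysis of the coupling between the $I$- and $J$-symbols then shows that the two cannot be simultaneously nonzero, so exactly one family survives, and renaming the surviving scalar $\sigma\in\C^*$ matches the explicit action of $\Omega(\lambda,\sigma,\eta)$ when $J$ acts trivially and of $\Gamma(\lambda,\sigma,\eta)$ when $I$ acts trivially, giving the desired isomorphism. \textbf{The main obstacle} is the middle step: solving the coupled system of functional equations and proving that its only solutions are these three-parameter families, in particular ruling out symbols of higher degree in $s,t$ or of non-exponential dependence on the mode; once the symbols are determined, the simplicity-driven dichotomy between $\Omega$ and $\Gamma$ is comparatively routine.
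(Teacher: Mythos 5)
First, a point of comparison: the paper does not prove this theorem at all --- it is imported wholesale from Theorems 4.12, 4.14 and 4.15 of \cite{CGZ1} --- so the step you call ``the technical core'' is exactly the content of the cited result, not a detail one may defer. Your setup is correct and is indeed the standard one: $\mathcal{H}$ is abelian, so $\mathcal{U}(\mathcal{H})\cong\C[L_0,H_0]$; the module may be identified with $\C[s,t]$; the relations $[L_0,X_n]=nX_n$ together with $[H_0,X_n]=\pm X_n$ (or $0$) turn each mode into a symbol-times-shift operator $X_nf=\varphi^X_n(s,t)\,f(s-n,t+\epsilon_X)$; and the Lie brackets become functional equations for the symbols. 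But the proposal then simply asserts the solution of those equations. Since that assertion \emph{is} the theorem, what you have written is a reasonable plan, not a proof.

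Second, the plan cannot be completed in the order you propose, because the intermediate claim --- that the functional equations alone force $\varphi^L_n=\lambda^n(s+nt+n\eta)$, $\varphi^H_n=\lambda^n t$, and constant multiples of $\lambda^n$ for the $I$- and $J$-symbols --- is false. For any $\beta\in\C$, set $L_nf=\lambda^n(s+n\beta t+n\eta)f(s-n,t)$, $H_nf=\lambda^n tf(s-n,t)$ and $I_n=J_n=0$: one checks directly that $[L_m,L_n]=(n-m)L_{m+n}$, $[L_m,H_n]=nH_{m+n}$ and $[H_m,H_n]=0$ hold, and all brackets involving $I,J$ hold trivially, so this is a $\mathcal{G}$-module that is free of rank one over $\mathcal{U}(\mathcal{H})$ and whose $L$-symbol is not of your claimed form whenever $\beta\neq1$. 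It is of course not simple ($t\,\C[s,t]$ is a proper submodule, which is exactly your own observation), but that is the point: simplicity cannot be postponed to a final ``dichotomy'' step. The workable logical order is the reverse of yours: first use simplicity to force $I$ or $J$ to act nontrivially, and only then do the brackets $[L_m,I_n]$ (resp.\ $[L_m,J_n]$) pin down the coefficient of $t$ in $\varphi^L_n$ and the exponential form of all symbols. The closing claim that $\theta$ and $\xi$ cannot both be nonzero is likewise asserted (``a short analysis'') rather than proved. In summary: correct framework, but the two steps carrying all the mathematical weight are missing, and one of them is formulated in a way that cannot be proved.
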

	Additionally, we need the following facts in the field of Lie algebras.
	
	For an arbitrary Lie algebra $\mathfrak{g}$, suppose $\left\{M_i\mid 1\le i\le n\right\}$ is a family of $\mathfrak{g}$-modules.
	Then the tensor product of vector spaces
	\[
	M_1\otimes\cdots\otimes M_n
	\]
	is a $\mathfrak{g}$-module, where for $x\in\mathfrak{g}$ and $m_i\in M_i$, the $\mathfrak{g}$-actions are given by
	\[
	x(m_1\otimes\cdots\otimes m_n)=\sum_{i=1}^{n}m_1\otimes\cdots\otimes(xm_i)\otimes\cdots\otimes m_n.
	\] 
	It is obvious that $M_1\otimes\cdots\otimes M_n\cong M_{\sigma(1)}\otimes\cdots\otimes M_{\sigma(n)}$ as $\mathfrak{g}$-modules, where $\left\{\sigma(1),\sigma(2),\ldots,\sigma(n)\right\}$ is a permutation of $\left\{1,2,\ldots,n\right\}$.
	
	A crucial lemma from linear algebra has been proved in \cite{TanZ}.
	\begin{lemma}[Lemma 2 in \cite{TanZ}]
		\label{crucial}
		Suppose $\lambda_1,\ldots,\lambda_m\in\C^*$, $s_1,\ldots,s_m\in\Z_{\ge1}$ and $s_1+\cdots+s_m=s$.
		For $n\in\Z$, $1\le t\le m$ and $s_1+\cdots+s_{t-1}+1\le k\le s_1+\cdots+s_t$, define $f_k(n)=n^{k-1-\sum_{j=1}^{t-1}s_j}\lambda_t^n$. 
		Let $\mathfrak{R}=(y_{pq})$ be the $s\times s$ matrix with $y_{pq}=f_q(p+r-1)$, where $r\ge0$ and $p, q=1,2,\ldots,s$.
		Then we have
		\begin{equation*}
			\det\mathfrak{R}=\prod_{j=1}^{m}(s_j-1)^{!!}\lambda_j^{{s_j(s_j+2r-1)}/{2}} \prod_{1\le i<j\le m}(\lambda_j-\lambda_i)^{s_is_j},
		\end{equation*}
		where $m^{!!}=m!\times(m-1)!\times\cdots\times 1!$ for $m\in\Z_{\ge 1}$ and $0^{!!}=1$.
	\end{lemma}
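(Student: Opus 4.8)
The plan is to obtain $\mathfrak R$ as a \emph{confluence} (a coalescing limit) of an ordinary exponential Vandermonde matrix and to keep careful track of the constants generated in the process. First note that since $r\ge 0$ and $p\ge 1$, each evaluation point $n=p+r-1$ is a non-negative integer, so every entry $y_{pq}=(p+r-1)^{e}\lambda_t^{\,p+r-1}$ is a polynomial in $\lambda_1,\dots,\lambda_m$; hence so is $\det\mathfrak R$, and it suffices to prove the identity when $\lambda_1,\dots,\lambda_m$ are pairwise distinct. Write $x_p=p+r-1$, and group the columns so that block $t$ carries the functions $x^{0}\lambda_t^{x},\dots,x^{s_t-1}\lambda_t^{x}$. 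As the building block, introduce $s$ auxiliary \emph{distinct} parameters $\mu_{t,a}$ ($1\le t\le m$, $1\le a\le s_t$) and let $A=(\mu_{t,a}^{\,x_p})$; factoring $\mu_{t,a}^{\,r}$ from each column turns $A$ into a transposed Vandermonde matrix, so
\begin{equation*}
\det A=\Big(\prod_{t,a}\mu_{t,a}^{\,r}\Big)\prod_{(t,a)<(t',a')}(\mu_{t',a'}-\mu_{t,a}),
\end{equation*}
and I split the pair-differences into within-block factors $\prod_t\prod_{b<a}(\mu_{t,a}-\mu_{t,b})$ and cross-block factors $\prod_{t<t'}\prod_{a,a'}(\mu_{t',a'}-\mu_{t,a})$.

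Next I coalesce each block, letting $\mu_{t,a}\to\lambda_t$. Inside each block I replace the columns by the successive divided differences of $\mu\mapsto\mu^{x_p}$ on the nodes $\mu_{t,1},\dots,\mu_{t,a}$; this is a right multiplication by a block upper-triangular matrix $U$ with $\det U=\prod_t\prod_{b<a}(\mu_{t,a}-\mu_{t,b})^{-1}$, which cancels precisely the within-block factors of $\det A$. Since the $a$-th divided difference converges to $\tfrac{1}{(a-1)!}\partial_\mu^{\,a-1}\mu^{x_p}\big|_{\mu=\lambda_t}=\tfrac{1}{(a-1)!}\lambda_t^{-(a-1)}x_p^{\underline{a-1}}\lambda_t^{x_p}$ (with $x^{\underline e}$ the falling factorial), the limiting matrix $A^{\infty}$ satisfies
\begin{equation*}
\det A^{\infty}=\Big(\prod_{t}\lambda_t^{\,rs_t}\Big)\prod_{1\le t<t'\le m}(\lambda_{t'}-\lambda_t)^{s_ts_{t'}}.
\end{equation*}

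It then remains to compare $A^{\infty}$ with $\mathfrak R$. Because each falling factorial is monic, $x^{\underline e}=x^{e}+(\text{lower order})$, the falling-factorial columns differ from the monomial columns of $\mathfrak R$ by a block unipotent change of basis of determinant $1$; moreover $A^{\infty}$ is obtained from the falling-factorial matrix by scaling the $a$-th column of block $t$ by $\tfrac{1}{(a-1)!}\lambda_t^{-(a-1)}$. Using $\prod_{a=1}^{s_t}(a-1)!=(s_t-1)^{!!}$ and $\sum_{a=1}^{s_t}(a-1)=\tfrac{s_t(s_t-1)}{2}$, this gives
\begin{equation*}
\det A^{\infty}=\det\mathfrak R\cdot\prod_{t=1}^{m}\frac{\lambda_t^{-s_t(s_t-1)/2}}{(s_t-1)^{!!}};
\end{equation*}
solving for $\det\mathfrak R$ and combining $\tfrac{s_t(s_t-1)}{2}+rs_t=\tfrac{s_t(s_t+2r-1)}{2}$ yields exactly the claimed expression.

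The delicate point is the bookkeeping of the confluence step: one must check that the divided-difference operations are triangular with the stated determinant, that their coalescing limits really are the derivative (equivalently falling-factorial) columns, and that each power of $\lambda_t$ and each factorial is tracked so that the superfactorials $(s_t-1)^{!!}$ and the exponents $s_t(s_t+2r-1)/2$ emerge precisely. One could instead avoid the limiting argument altogether by inducting on $s$ and peeling off one block at a time through column reduction, but the coalescence route keeps the bookkeeping transparent and explains the origin of every factor.
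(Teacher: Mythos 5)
Your argument is correct. Note first that the paper itself gives no proof of this lemma: it is quoted verbatim as Lemma 2 of the cited Tan--Zhao paper \cite{TanZ}, so there is no in-paper proof to compare against; your confluent-Vandermonde derivation is a self-contained substitute for that citation. The bookkeeping checks out at every step: since $x_p=p+r-1\ge 0$, each $\mu\mapsto\mu^{x_p}$ is a polynomial, so the divided-difference columns are legitimate linear combinations of the original columns, the matrix $U$ is block-diagonal with upper-triangular blocks whose diagonal entries are $\prod_{b<a}(\mu_{t,a}-\mu_{t,b})^{-1}$, and continuity of the determinant justifies passing to the limit $\mu_{t,a}\to\lambda_t$, where the $(a-1)$-st divided difference becomes $\tfrac{1}{(a-1)!}x_p^{\underline{a-1}}\lambda_t^{x_p-(a-1)}$. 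The final comparison is also right: the unipotent change of basis from falling factorials $x^{\underline{a-1}}$ to monomials $x^{a-1}$ has determinant $1$, the column scalings contribute $\prod_{a=1}^{s_t}(a-1)!=(s_t-1)^{!!}$ and $\lambda_t^{-s_t(s_t-1)/2}$, and $\tfrac{s_t(s_t-1)}{2}+rs_t=\tfrac{s_t(s_t+2r-1)}{2}$ recovers the stated exponent, with the cross-block factors $(\lambda_{t'}-\lambda_t)^{s_ts_{t'}}$, $t<t'$, matching the sign convention of the transposed Vandermonde. One small remark: the reduction to pairwise distinct $\lambda_t$ at the outset is harmless but not actually needed, since your limiting argument only requires the auxiliary nodes $\mu_{t,a}$ to be distinct, and both sides of the identity vanish (and the limit formula remains valid) when two of the $\lambda_t$ coincide.
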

\section{Simple Tensor Product Module}
For $\boldsymbol{m}=(m_1,m_2)\in\N^2$, $\boldsymbol{\lambda}=(\lambda_1,\lambda_2,\ldots,\lambda_{m_1+m_2}),\,\boldsymbol{\sigma}=(\sigma_1,\sigma_2,\ldots,\sigma_{m_1+m_2})\in(\C^*)^{m_1+m_2}$, $\boldsymbol{\eta}=(\eta_1,\eta_2,\ldots,\eta_{m_1+m_2})\in\C^{m_1+m_2}$, and an arbitrary simple restricted $\mathcal{G}$-module $V$, define the tensor product $\mathcal{G}$-module as follows:
\begin{equation}
	\label{tp}
	\boldsymbol{T}(\boldsymbol{m},\boldsymbol{\lambda},\boldsymbol{\sigma},\boldsymbol{\eta},V)=\left(\bigotimes_{k=1}^{m_1}\Omega(\lambda_k,\sigma_k,\eta_k)\right)\otimes\left(\bigotimes_{k=m_1+1}^{m_1+m_2}\Gamma(\lambda_k,\sigma_k,\eta_k)\right)\otimes V.
\end{equation}
As vector spaces, we have 
\[
\Omega(\lambda_k,\sigma_k,\eta_k)=\C[s_k,t_k]
\] 
for $1\le k\le m_1$ and \[\Gamma(\lambda_k,\sigma_k,\eta_k)=\C[x_{k-m_1},y_{k-m_1}]\]
for $m_1+1\le k\le m_1+m_2$.
Hence, there is a linear isomorphism
\[
\boldsymbol{T}(\boldsymbol{m},\boldsymbol{\lambda},\boldsymbol{\sigma},\boldsymbol{\eta},V)\cong \C[s_1,\ldots,s_{m_1},x_1,\ldots,x_{m_2},t_1,\ldots,t_{m_1},y_1,\ldots,y_{m_2}]\otimes V.
\]
In the following, for any $v\in V$, we use $\boldsymbol{s}^{\boldsymbol{p}}\boldsymbol{t}^{\boldsymbol{q}}\otimes\boldsymbol{x}^{\boldsymbol{r}}\boldsymbol{y}^{\boldsymbol{s}}\otimes v$ to denote $$s_1^{p_1}t_1^{q_1}\otimes\cdots\otimes s_{m_1}^{p_{m_1}}t_{m_1}^{q_{m_1}}\otimes x_1^{r_1}y_1^{s_1}\otimes\cdots\otimes x_{m_2}^{r_{m_2}}y_{m_2}^{s_{m_2}}\otimes v$$ for brevity, where
\begin{align*}
\boldsymbol{p}&=(p_1,\ldots,p_{m_1}),\quad\boldsymbol{q}=(q_1,\ldots,q_{m_1})\in\N^{m_1},\quad
\boldsymbol{r}=(r_1,\ldots,r_{m_2}),\quad\boldsymbol{s}=(s_1,\ldots,s_{m_2})\in\N^{m_2}.
\end{align*}

For $m\in\N$, recall that there is a \textbf{lexicographical order} on $\N^m$, which is defined by
\[
(a_1,\ldots,a_m)\succ(b_1,\ldots,b_m)\Leftrightarrow \exists\; 1\le k\le m\quad\mbox{s.t.}\quad a_1=b_1,\ldots a_{k-1}=b_{k-1}\quad\mbox{and}\quad a_k>b_k.  
\]
Now, for $\boldsymbol{p}=(p_1,\ldots,p_{m_1}),\boldsymbol{q}=(q_1,\ldots,q_{m_1})\in\N^{m_1}$ and 
$\boldsymbol{r}=(r_1,\ldots,r_{m_2}),\boldsymbol{s}=(s_1,\ldots,s_{m_2})\in\N^{m_2}$, define
\[
(\boldsymbol{p},\boldsymbol{q},\boldsymbol{r},\boldsymbol{s})\succ  
(\boldsymbol{a},\boldsymbol{b},\boldsymbol{c},\boldsymbol{d})
\]
if and only if
\[
\boldsymbol{p}\succ\boldsymbol{a}\quad\mbox{or}\quad\boldsymbol{p}=\boldsymbol{a}, \boldsymbol{r}\succ\boldsymbol{c}\quad\mbox{or}\quad\boldsymbol{p}=\boldsymbol{a}, \boldsymbol{r}=\boldsymbol{c},\boldsymbol{q}\succ\boldsymbol{b}\quad\mbox{or}\quad\boldsymbol{p}=\boldsymbol{a}, \boldsymbol{r}=\boldsymbol{c},\boldsymbol{q}=\boldsymbol{b}, \boldsymbol{s}\succ\boldsymbol{d}.
\]
For a non-zero element
\[
g=\sum_{(\boldsymbol{p},\boldsymbol{q},\boldsymbol{r},\boldsymbol{s})\in E}\boldsymbol{s}^{\boldsymbol{p}}\boldsymbol{t}^{\boldsymbol{q}}\otimes\boldsymbol{x}^{\boldsymbol{r}}\boldsymbol{y}^{\boldsymbol{s}}\otimes v_{\boldsymbol{p}\boldsymbol{q}\boldsymbol{r}\boldsymbol{s}}\in\boldsymbol{T}(\boldsymbol{m},\boldsymbol{\lambda},\boldsymbol{\sigma},\boldsymbol{\eta},V),
\]
where $E\subset(\N^{m_1})^2\times(\N^{m_2})^2$ is a finite set and $0\ne v_{\boldsymbol{p}\boldsymbol{q}\boldsymbol{r}\boldsymbol{s}}\in V$, define the degree of $g$ to be the maximal element of $E$, and denote it by $\deg(g)$. For zero element, we assume its degree is infinitesimal.
For $1\le l\le \max(m_1,m_2)$, define $P_l=\max\left\{p_l\mid (\boldsymbol{p},\boldsymbol{q},\boldsymbol{r},\boldsymbol{s})\in E\right\}$ and $R_l=\max\left\{r_l\mid (\boldsymbol{p},\boldsymbol{q},\boldsymbol{r},\boldsymbol{s})\in E\right\}$.
We use $e_l$ and $\epsilon_l$ to denote $(\delta_{1l},\ldots,\delta_{m_1l})\in\N^{m_1}$ and $(\delta_{1l},\ldots,\delta_{m_2l})\in\N^{m_2}$, respectively.
\subsection{Simplicity}
Let $X\in\left\{L,H,I,J\right\}$ and $p\in\Z$.
For a non-zero element
\[
g=\sum_{(\boldsymbol{p},\boldsymbol{q},\boldsymbol{r},\boldsymbol{s})\in E}\boldsymbol{s}^{\boldsymbol{p}}\boldsymbol{t}^{\boldsymbol{q}}\otimes\boldsymbol{x}^{\boldsymbol{r}}\boldsymbol{y}^{\boldsymbol{s}}\otimes v_{\boldsymbol{p}\boldsymbol{q}\boldsymbol{r}\boldsymbol{s}}\in\boldsymbol{T}(\boldsymbol{m},\boldsymbol{\lambda},\boldsymbol{\sigma},\boldsymbol{\eta},V),
\]
define a subspace
\begin{equation}
	\label{NX}
	N(X,g,p)=\mathrm{span}_{\C}\left\{g,X_ng\mid n\ge p\right\}.
\end{equation}
Then, the following lemma holds, whose proof is parallel to that of Proposition 3.2 in \cite{CY1}.
\begin{lemma}
	\label{syy}
	Suppose $\lambda_1,\lambda_2,\ldots,\lambda_{m_1+m_2}\in\C^*$ are pairwise distinct.
	 Take
	\[
	0\ne g=\sum_{(\boldsymbol{p},\boldsymbol{q},\boldsymbol{r},\boldsymbol{s})\in E}\boldsymbol{s}^{\boldsymbol{p}}\boldsymbol{t}^{\boldsymbol{q}}\otimes\boldsymbol{x}^{\boldsymbol{r}}\boldsymbol{y}^{\boldsymbol{s}}\otimes v_{\boldsymbol{p}\boldsymbol{q}\boldsymbol{r}\boldsymbol{s}}\in \boldsymbol{T}(\boldsymbol{m},\boldsymbol{\lambda},\boldsymbol{\sigma},\boldsymbol{\eta},V).
	\]
	Choose $s_1,s_2\in\N$ such that $L_mv_{\boldsymbol{p}\boldsymbol{q}\boldsymbol{r}\boldsymbol{s}}=H_nv_{\boldsymbol{p}\boldsymbol{q}\boldsymbol{r}\boldsymbol{s}}=0$ for $m\ge s_1$, $n\ge s_2$, and $(\boldsymbol{p},\boldsymbol{q},\boldsymbol{r},\boldsymbol{s})\in E$.
	Then, for $i\ge s_1$, $j\ge s_2$, and $1\le l\le \max(m_1,m_2)$, the following statements hold.
	\begin{align}
		\label{1}
		&\sum_{(\boldsymbol{p},\boldsymbol{q},\boldsymbol{r},\boldsymbol{s})\in E}\boldsymbol{s}^{\boldsymbol{p}+e_l}\boldsymbol{t}^{\boldsymbol{q}}\otimes\boldsymbol{x}^{\boldsymbol{r}}\boldsymbol{y}^{\boldsymbol{s}}\otimes v_{\boldsymbol{p}\boldsymbol{q}\boldsymbol{r}\boldsymbol{s}}\in N(L,g,i),\\
		\label{2}
		&\sum_{(\boldsymbol{p},\boldsymbol{q},\boldsymbol{r},\boldsymbol{s})\in E}\boldsymbol{s}^{\boldsymbol{p}}\boldsymbol{t}^{\boldsymbol{q}}\otimes\boldsymbol{x}^{\boldsymbol{r}+\epsilon_l}\boldsymbol{y}^{\boldsymbol{s}}\otimes v_{\boldsymbol{p}\boldsymbol{q}\boldsymbol{r}\boldsymbol{s}}\in N(L,g,i),\\
		\label{3}
		&\sum_{(\boldsymbol{p},\boldsymbol{q},\boldsymbol{r},\boldsymbol{s})\in E}\boldsymbol{s}^{\boldsymbol{p}}\boldsymbol{t}^{\boldsymbol{q}+e_l}\otimes\boldsymbol{x}^{\boldsymbol{r}}\boldsymbol{y}^{\boldsymbol{s}}\otimes v_{\boldsymbol{p}\boldsymbol{q}\boldsymbol{r}\boldsymbol{s}}\in N(H,g,j),\\
		\label{4}
		&\sum_{(\boldsymbol{p},\boldsymbol{q},\boldsymbol{r},\boldsymbol{s})\in E}\boldsymbol{s}^{\boldsymbol{p}}\boldsymbol{t}^{\boldsymbol{q}}\otimes\boldsymbol{x}^{\boldsymbol{r}}\boldsymbol{y}^{\boldsymbol{s}+\epsilon_l}\otimes v_{\boldsymbol{p}\boldsymbol{q}\boldsymbol{r}\boldsymbol{s}}\in N(H,g,j),\\
		\label{5}
		&\sum_{\substack{(\boldsymbol{p},\boldsymbol{q},\boldsymbol{r},\boldsymbol{s})\in E \\ p_l=P_l }}\boldsymbol{s}^{\boldsymbol{p}-P_le_l}\boldsymbol{t}^{\boldsymbol{q}+e_l}\otimes\boldsymbol{x}^{\boldsymbol{r}}\boldsymbol{y}^{\boldsymbol{s}}\otimes v_{\boldsymbol{p}\boldsymbol{q}\boldsymbol{r}\boldsymbol{s}}\in N(H,g,j),\\
		\label{6}
		&\sum_{\substack{(\boldsymbol{p},\boldsymbol{q},\boldsymbol{r},\boldsymbol{s})\in E \\ r_l=R_l}}\boldsymbol{s}^{\boldsymbol{p}}\boldsymbol{t}^{\boldsymbol{q}}\otimes\boldsymbol{x}^{\boldsymbol{r}-R_l\epsilon_l}\boldsymbol{y}^{\boldsymbol{s}+\epsilon_l}\otimes v_{\boldsymbol{p}\boldsymbol{q}\boldsymbol{r}\boldsymbol{s}}\in N(H,g,j).
	\end{align}
\end{lemma}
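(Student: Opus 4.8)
The plan is to expand each $X_ng$ (for $X\in\{L,H\}$ and $n$ large) into a finite sum of exponential--polynomial terms, one for each distinct scalar $\lambda_k$, and then to peel off individual coefficients by inverting the generalized Vandermonde system supplied by Lemma~\ref{crucial}. The six identities \eqref{1}--\eqref{6} will emerge as specific such coefficients.

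First I would treat $X=L$ and compute $L_ng$ for $n\ge i$. Because $V$ is restricted and $i\ge s_1$, the summand in which $L_n$ acts on the $V$-factor annihilates every $v_{\boldsymbol p\boldsymbol q\boldsymbol r\boldsymbol s}$, so only the summands hitting the polynomial tensor factors survive. On the $k$-th factor $L_n$ contributes a single power $\lambda_k^n$, performs the shift $s_k\mapsto s_k-n$ (or $x\mapsto x-n$ on a $\Gamma$-factor) and multiplies by the linear factor $s_k+nt_k+n\eta_k$ (or its $\Gamma$-analogue), while all other tensor slots are untouched. Collecting terms by frequency gives
\[
L_ng=\sum_{k=1}^{m_1+m_2}\lambda_k^n\,\Phi_k(n),\qquad \Phi_k(n)=\sum_{b\ge0}n^b\,\Phi_{k,b},
\]
with each coefficient $\Phi_{k,b}\in\boldsymbol T(\boldsymbol m,\boldsymbol\lambda,\boldsymbol\sigma,\boldsymbol\eta,V)$ independent of $n$ and $\deg_n\Phi_k\le P_k+1$ on an $\Omega$-factor (resp.\ $\le R_{k-m_1}+1$ on a $\Gamma$-factor). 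A direct binomial expansion of $(s_l+nt_l+n\eta_l)(s_l-n)^{p_l}$ shows its constant term in $n$ is $s_l^{\,p_l+1}$, so the coefficient $\Phi_{l,0}$ on the $l$-th $\Omega$-factor is precisely the left-hand side of \eqref{1}, and the corresponding $\Gamma$-coefficient is the left-hand side of \eqref{2}.

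Next I would repeat the computation for $X=H$ with $n\ge j\ge s_2$, where now $H_n$ sends $s_l^{p_l}t_l^{q_l}$ to $\lambda_l^n(s_l-n)^{p_l}t_l^{q_l+1}$ on the $l$-th $\Omega$-factor. Its constant-in-$n$ coefficient is $s_l^{p_l}t_l^{q_l+1}$, which reproduces \eqref{3}, while its leading coefficient, that of $n^{P_l}$, receives contributions only from indices with $p_l=P_l$ and equals $(-1)^{P_l}$ times the left-hand side of \eqref{5}. The two $\Gamma$-factor computations are identical in form and yield \eqref{4} and \eqref{6}, the latter up to the nonzero scalar $(-1)^{R_l}$.

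It remains to promote these coefficient identities to genuine membership in $N(X,g,\cdot)$. Let $\mu_k=\deg_n\Phi_k+1$ be the block size attached to $\lambda_k$ and $\mu=\sum_k\mu_k$. The functions $n\mapsto n^b\lambda_k^n$ occurring above are exactly the $f_q$ of Lemma~\ref{crucial}; evaluating them at the $\mu$ consecutive integers $n=i,i+1,\dots,i+\mu-1$ (with shift $r=i$) produces the matrix $\mathfrak R$, whose determinant is nonzero because the $\lambda_k$ are pairwise distinct. Solving this invertible system expresses every $\Phi_{k,b}$ as a $\C$-linear combination of $\{X_ng\mid i\le n\le i+\mu-1\}$, so each $\Phi_{k,b}\in N(X,g,i)$; specializing to the coefficients identified above gives \eqref{1}--\eqref{6}. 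I expect the only delicate point to be the bookkeeping that feeds Lemma~\ref{crucial}: one must verify that after discarding the $V$-contribution the surviving frequencies are exactly the pairwise-distinct $\lambda_k$, and assign to each its correct block size $\mu_k$ so that $\mathfrak R$ is square and the determinant formula applies. The coefficient extractions themselves are then routine binomial bookkeeping.
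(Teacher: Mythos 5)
Your proof is correct and follows essentially the same route as the paper, which does not spell out the argument but cites it as parallel to Proposition 3.2 of \cite{CY1}: expand $X_ng$ ($X\in\{L,H\}$, $n$ large) as $\sum_k\lambda_k^n\Phi_k(n)$ with polynomial coefficients, then invert the generalized Vandermonde system of Lemma \ref{crucial} to place every coefficient $\Phi_{k,b}$ in $N(X,g,i)$, identifying \eqref{1}--\eqref{4} as constant-in-$n$ coefficients and \eqref{5}--\eqref{6} as leading coefficients (up to the harmless signs $(-1)^{P_l}$, $(-1)^{R_l}$). Your coefficient extractions and block-size bookkeeping are accurate, so nothing is missing.
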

A direct result of \eqref{1}--\eqref{4} is the following assertion, whose proof is parallel to that of Proposition 3.3 in \cite{CY1}.
\begin{lemma}
	\label{generate}
	Suppose $\lambda_1,\lambda_2,\ldots,\lambda_{m_1+m_2}\in\C^*$ are pairwise distinct.
	Then the tensor product $\boldsymbol{T}(\boldsymbol{m},\boldsymbol{\lambda},\boldsymbol{\sigma},\boldsymbol{\eta},V)$ can be generated by $1\otimes\cdots\otimes1\otimes v$ for an arbitrary $0\ne v\in V$. 
\end{lemma}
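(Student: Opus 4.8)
The plan is to show that the $\mathcal{G}$-submodule $W$ generated by $g_0=1\otimes\cdots\otimes1\otimes v$ exhausts $\boldsymbol{T}(\boldsymbol{m},\boldsymbol{\lambda},\boldsymbol{\sigma},\boldsymbol{\eta},V)$. The opening observation is that whenever $g\in W$ and $X\in\{L,H,I,J\}$, the entire subspace $N(X,g,p)=\mathrm{span}_{\C}\{g,X_ng\mid n\ge p\}$ already lies in $W$, since each $X_ng$ is again an element of the submodule. Consequently, every membership conclusion in Lemma \ref{syy} becomes automatically a membership in $W$, once its input $g$ is known to belong to $W$.

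First I would generate the full polynomial part sitting over a single fixed vector. Fix $0\ne u\in V$ and suppose some monomial $\boldsymbol{s}^{\boldsymbol{p}}\boldsymbol{t}^{\boldsymbol{q}}\otimes\boldsymbol{x}^{\boldsymbol{r}}\boldsymbol{y}^{\boldsymbol{s}}\otimes u$ lies in $W$. Because $V$ is restricted, there exist $s_1,s_2\in\N$ annihilating $u$ as required, so Lemma \ref{syy} applies to this single-term element. Relations \eqref{1}--\eqref{4} then show that raising any one of the exponents $p_l,r_l,q_l,s_l$ by one again produces an element of $W$. Starting from $1\otimes\cdots\otimes1\otimes u$ and iterating these four operations, I reach $\boldsymbol{s}^{\boldsymbol{p}}\boldsymbol{t}^{\boldsymbol{q}}\otimes\boldsymbol{x}^{\boldsymbol{r}}\boldsymbol{y}^{\boldsymbol{s}}\otimes u\in W$ for every choice of multi-indices. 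In other words, if $1\otimes\cdots\otimes1\otimes u\in W$, then $\C[\boldsymbol{s},\boldsymbol{t},\boldsymbol{x},\boldsymbol{y}]\otimes u\subseteq W$.

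Next I would move inside $V$ using the derivation form of the tensor action. Writing $P$ for the polynomial factor, the action of $X_n$ splits as $X_n(P\otimes u)=(X_nP)\otimes u+P\otimes(X_nu)$, where $X_nP$ denotes the action on the polynomial factor alone and is again a polynomial. Applying this with $P=1$ to $1\otimes\cdots\otimes1\otimes u\in W$ gives $(X_n1)\otimes u+1\otimes\cdots\otimes1\otimes(X_nu)\in W$; since $(X_n1)\otimes u\in\C[\boldsymbol{s},\boldsymbol{t},\boldsymbol{x},\boldsymbol{y}]\otimes u\subseteq W$ by the previous paragraph, I conclude $1\otimes\cdots\otimes1\otimes(X_nu)\in W$. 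Hence the set $\{u\in V\mid 1\otimes\cdots\otimes1\otimes u\in W\}$ is a $\mathcal{G}$-submodule of $V$ containing the nonzero vector $v$, so by simplicity of $V$ it is all of $V$. Feeding each $u\in V$ back into the first step yields $\C[\boldsymbol{s},\boldsymbol{t},\boldsymbol{x},\boldsymbol{y}]\otimes V\subseteq W$, that is, $W=\boldsymbol{T}(\boldsymbol{m},\boldsymbol{\lambda},\boldsymbol{\sigma},\boldsymbol{\eta},V)$.

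The delicate point is the decoupling of the two moves: Lemma \ref{syy} lets me raise polynomial degrees without disturbing the $V$-component, while the derivation property of the tensor action lets me act purely on $V$, at the cost of polynomial terms that the first move has already placed in $W$. The step demanding the most care is ensuring that Lemma \ref{syy} remains applicable at every stage, which needs the restrictedness of $V$ to supply the bounds $s_1,s_2$ for each intermediate vector $u$; this is precisely where the hypothesis that $V$ is a restricted module is used.
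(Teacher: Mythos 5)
Your proof is correct and follows essentially the same route as the paper, which does not spell out an argument but defers to Proposition 3.3 of \cite{CY1}: one first uses relations \eqref{1}--\eqref{4} of Lemma \ref{syy} (valid inside any submodule containing $g$) to obtain $\C[\boldsymbol{s},\boldsymbol{t},\boldsymbol{x},\boldsymbol{y}]\otimes u$ from $1\otimes\cdots\otimes 1\otimes u$, and then exploits the derivation form of the tensor action together with the simplicity of $V$ to show that $\left\{u\in V\mid 1\otimes\cdots\otimes1\otimes u\in W\right\}$ is all of $V$. Your handling of the restrictedness hypothesis (supplying the bounds $s_1,s_2$ for each intermediate vector so that Lemma \ref{syy} applies) is exactly the point that makes the standard argument go through.
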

Now, we are in a position to prove the \textbf{main result} of this subsection.
\begin{thm}
	\label{simple}
	The tensor product $\mathcal{G}$-module $\boldsymbol{T}(\boldsymbol{m},\boldsymbol{\lambda},\boldsymbol{\sigma},\boldsymbol{\eta},V)$ is simple if and only if any two of $\lambda_1,\lambda_2,\ldots,\lambda_{m_1+m_2}\in\C^*$ are distinct.
\end{thm}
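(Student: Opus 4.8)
The plan is to establish the two implications separately. For the ``only if'' part I would argue by contraposition: if $\lambda_i=\lambda_j$ for some $i\neq j$, I would exhibit an explicit proper nonzero submodule. The point is that when the two scaling parameters coincide, the combined action $X_n^{(i)}+X_n^{(j)}$ (for $X\in\{L,H,I,J\}$) on the two relevant tensor slots carries the common factor $\lambda_i^n=\lambda_j^n$, and both Leibniz summands act on the diagonal variable by the same translation $U\mapsto U-n$, where $U$ is the sum of the two slots' primary variables. Consequently the subspace of elements depending on those two slots only through $U$ and through $T$, the sum of the two secondary variables, is stable; concretely this is $\C[s_i+s_j,\,t_i+t_j]$ in the $\Omega\otimes\Omega$ case, with the evident analogues $\C[s_i+x_{j'},\,t_i+y_{j'}]$ and $\C[x_{i'}+x_{j'},\,y_{i'}+y_{j'}]$ in the mixed and $\Gamma\otimes\Gamma$ cases. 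Tensoring this with all remaining factors and with $V$ gives a subspace $\mathcal{S}$, and a one-line check of each generator on $g(U,T)\cdot(\text{rest})$ shows $L_n,H_n,I_n,J_n$ preserve $\mathcal{S}$ (the untouched slots and $V$ being mapped into themselves). Since $\mathcal{S}$ is nonzero but omits, for instance, $t_i$ alone, it is a proper submodule, so $\boldsymbol{T}(\boldsymbol{m},\boldsymbol{\lambda},\boldsymbol{\sigma},\boldsymbol{\eta},V)$ is not simple.

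For the ``if'' part assume the $\lambda$'s are pairwise distinct and let $W$ be a nonzero submodule. By Lemma \ref{generate} it suffices to produce an element $1\otimes\cdots\otimes1\otimes v$ with $0\neq v\in V$ inside $W$, for then $W$ contains the submodule generated by this element, namely all of $\boldsymbol{T}(\boldsymbol{m},\boldsymbol{\lambda},\boldsymbol{\sigma},\boldsymbol{\eta},V)$. I would choose $0\neq g\in W$ of minimal degree in the lexicographical order and strip its variables in two stages. First, to remove the primary variables, I would apply \eqref{5} at the least index $l$ with $P_l>0$: the result is a nonzero element of $N(H,g,j)\subseteq W$ all of whose terms have $s_l$-exponent $0$, hence of strictly smaller degree because $\boldsymbol{p}$ has top priority in the order, contradicting minimality. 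Iterating forces every $P_l=0$; with $\boldsymbol{p}=\boldsymbol{0}$ fixed the order next weights $\boldsymbol{r}$, and the symmetric use of \eqref{6} forces every $R_l=0$. Thus $g=\sum_{\boldsymbol{q},\boldsymbol{s}}\boldsymbol{t}^{\boldsymbol{q}}\boldsymbol{y}^{\boldsymbol{s}}\otimes v_{\boldsymbol{q}\boldsymbol{s}}$ involves no $s_k$ or $x_k$.

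In the second stage I would delete the secondary variables using $I_n$ and $J_n$. Because $V$ is restricted, for all large $n$ one has $I_ng=\sum_{k=1}^{m_1}\lambda_k^n\sigma_k\,(g|_{t_k\mapsto t_k-1})$, the $\Gamma$-slots and the $V$-slot contributing nothing. The distinctness of $\lambda_1,\ldots,\lambda_{m_1}$ makes the coefficient matrix $(\lambda_k^n)$ nonsingular of Vandermonde type (Lemma \ref{crucial} with all block sizes equal to $1$), so inverting over enough values of $n$ yields $g|_{t_k\mapsto t_k-1}\in W$ for each $k$, where $\sigma_k\neq0$ is used. The finite difference $g\mapsto g-g|_{t_k\mapsto t_k-1}$ then drops the $t_k$-degree by one, stays in $W$, and creates no primary variables; iterating clears every $t_k$, and the parallel argument with $J_n$ clears every $y_k$. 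What remains is a nonzero element $1\otimes\cdots\otimes1\otimes v\in W$, and Lemma \ref{generate} completes the proof.

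I expect the crux to be the second stage, where one must simultaneously exploit distinctness to isolate a single tensor factor, restrictedness of $V$ to discard the $I_n^{(V)}$ and $J_n^{(V)}$ terms, and the finite-difference mechanism to lower degrees, all the while checking that the tracked element never cancels to zero. By contrast the first stage is comparatively routine, the only point to verify being that each application of \eqref{5} and \eqref{6} is an injective shift of monomials and hence produces no accidental cancellation.
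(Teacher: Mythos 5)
Your proposal is correct, and it splits from the paper in an interesting way. The sufficiency half is essentially the paper's own argument: take a nonzero element of minimal degree in the submodule, use \eqref{5} and \eqref{6} to force all $s$- and $x$-exponents to vanish, then use $I_n$ and $J_n$ for large $n$ together with the (generalized) Vandermonde nonsingularity to isolate the shifts $g|_{t_k\mapsto t_k-1}$ and $g|_{y_l\mapsto y_l+1}$, and finish with Lemma \ref{generate}. The only cosmetic difference is at the end: the paper forms $g-u_i$ and contradicts minimality of $\deg(g)$ in one stroke, whereas you run an explicit finite-difference descent (each difference is nonzero and lowers the $t_k$- or $y_l$-degree by exactly one); both are valid, and your descent has the small advantage of not needing minimality after the first stage. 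The necessity half, however, is genuinely different. The paper disposes of it by citing Propositions 3.3 and 4.4 of \cite{CGZ2} for the reducibility of $\Omega\otimes\Omega$, $\Omega\otimes\Gamma$ and $\Gamma\otimes\Gamma$ with equal $\lambda$, while you exhibit the proper submodule directly: when $\lambda_i=\lambda_j=\lambda$, both Leibniz summands carry the same factor $\lambda^n$ and perform the same shift of the primary variable, so for $g=g(U,T)$ with $U=s_i+s_j$, $T=t_i+t_j$ (and the mixed and $\Gamma\otimes\Gamma$ analogues) one gets, e.g.,
\begin{equation*}
L_n g=\lambda^n\bigl(U+nT+n(\eta_i+\eta_j)\bigr)g(U-n,T),\qquad H_n g=\lambda^n T\,g(U-n,T),
\end{equation*}
with $I_n$ and $J_n$ likewise acting by shifts of $U$ and $T$; hence $\C[U,T]\otimes(\text{remaining factors})\otimes V$ is a nonzero proper submodule. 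I checked all three cases and the computation goes through (note in the $\Omega\otimes\Omega$ case $I_n$ acts by the scalar $\lambda^n(\sigma_i+\sigma_j)$, which may vanish, but invariance is unaffected). Your route makes the theorem self-contained at the cost of a page of verification; the paper's citation buys brevity at the cost of an external dependency. One spot worth tightening in your write-up: in stage one, ``all terms have $s_l$-exponent $0$'' does not by itself force a smaller degree (the maximal exponent $P_l$ need not occur in the leading term); the correct justification is that every surviving term has $\boldsymbol{p}$-part $\boldsymbol{p}-P_le_l\prec\boldsymbol{p}\preceq\boldsymbol{i}$, which is strict since $P_l>0$.
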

\begin{proof}
	First, suppose $\lambda_1,\lambda_2,\ldots,\lambda_{m_1+m_2}\in\C^*$ are pairwise distinct and $N$ is a non-zero submodule.
	Take 
	\[
	0\ne g=\sum_{(\boldsymbol{p},\boldsymbol{q},\boldsymbol{r},\boldsymbol{s})\in E}\boldsymbol{s}^{\boldsymbol{p}}\boldsymbol{t}^{\boldsymbol{q}}\otimes\boldsymbol{x}^{\boldsymbol{r}}\boldsymbol{y}^{\boldsymbol{s}}\otimes v_{\boldsymbol{p}\boldsymbol{q}\boldsymbol{r}\boldsymbol{s}}\in N
	\]
	with the minimal degree $(\boldsymbol{i},\boldsymbol{j},\boldsymbol{k},\boldsymbol{l})$.
	Then \eqref{5} and \eqref{6} show that the entries of $\boldsymbol{i}$ and $\boldsymbol{k}$ are all zero.
	It follows that \[
	g=\sum_{(\boldsymbol{0},\boldsymbol{q},\boldsymbol{0},\boldsymbol{s})\in E}\boldsymbol{t}^{\boldsymbol{q}}\otimes\boldsymbol{y}^{\boldsymbol{s}}\otimes v_{\boldsymbol{0}\boldsymbol{q}\boldsymbol{0}\boldsymbol{s}}.
	\]
	If $\boldsymbol{j}\ne\boldsymbol{0}$, choose $s\in\N$ such that $I_nv_{\boldsymbol{p}\boldsymbol{q}\boldsymbol{r}\boldsymbol{s}}=0$ for $n\ge s$ and $(\boldsymbol{p},\boldsymbol{q},\boldsymbol{r},\boldsymbol{s})\in E$, then applying $I_n$ with $n\ge s$, we have
	\begin{align*}
		&I_n\left(\sum_{(\boldsymbol{0},\boldsymbol{q},\boldsymbol{0},\boldsymbol{s})\in E}\boldsymbol{t}^{\boldsymbol{q}}\otimes\boldsymbol{y}^{\boldsymbol{s}}\otimes v_{\boldsymbol{0}\boldsymbol{q}\boldsymbol{0}\boldsymbol{s}}\right)\\
		=&\sum_{k=1}^{m_1}\lambda_k^{n}\sigma_k\left(\sum_{(\boldsymbol{0},\boldsymbol{q},\boldsymbol{0},\boldsymbol{s})\in E} t_1^{q_1}\otimes\cdots\otimes (t_k-1)^{q_k}\otimes\cdots\otimes t_{m_1}^{q_{m_1}}\otimes\boldsymbol{y}^{\boldsymbol{s}}\otimes v_{\boldsymbol{0}\boldsymbol{q}\boldsymbol{0}\boldsymbol{s}}\right).
	\end{align*}
	Using the Vandermonde determinant and noting that $\sigma_k\ne 0$, we see that
	\begin{equation}
		\label{u_k}
	u_{k}=\sum_{(\boldsymbol{0},\boldsymbol{q},\boldsymbol{0},\boldsymbol{s})\in E} t_1^{q_1}\otimes\cdots\otimes (t_k-1)^{q_k}\otimes\cdots\otimes t_{m_1}^{q_{m_1}}\otimes\boldsymbol{y}^{\boldsymbol{s}}\otimes v_{\boldsymbol{0}\boldsymbol{q}\boldsymbol{0}\boldsymbol{s}}\in N
	\end{equation}
	for $1\le k\le m_1$.
	If $\boldsymbol{j}\ne\boldsymbol{0}$, let $i=\min\left\{k\mid j_k\ne 0\right\}$, then $g\ne u_i$ and  $\deg\left(g-u_i\right)\prec (\boldsymbol{0},\boldsymbol{j},\boldsymbol{0},\boldsymbol{l})$, which is a contradiction.
	Hence we have $\boldsymbol{j}=\boldsymbol{0}$.
	Consider the action of $J_n$, for $1\le l\le m_2$,
	similar arguments demonstrate that
	\begin{equation}
		\label{w_l}
	w_l=\sum_{(\boldsymbol{0},\boldsymbol{0},\boldsymbol{0},\boldsymbol{s})\in E}1^{\otimes m_1}\otimes y_1^{s_1}\otimes\cdots\otimes (y_l+1)^{s_l}\otimes\cdots\otimes y_{m_2}^{s_{m_2}}\otimes v_{\boldsymbol{0}\boldsymbol{0}\boldsymbol{0}\boldsymbol{s}}\in N,
	\end{equation}
	and hence we have $\boldsymbol{l}=\boldsymbol{0}$.
	Now, we see that $1\otimes\cdots\otimes1\otimes v$ for some $0\ne v\in V$, combined with Lemma \ref{generate}, we conclude that $N=\boldsymbol{T}(\boldsymbol{m},\boldsymbol{\lambda},\boldsymbol{\sigma},\boldsymbol{\eta},V)$.
	
	Conversely, assume that $\lambda_1,\lambda_2,\ldots,\lambda_{m_1+m_2}\in\C^*$ are not pairwise distinct.
	If $\lambda_i=\lambda_j$ for $1\le i\ne j\le m_1+m_2$, denote $\lambda_i$ by $\lambda$. 
	Then by Propositions 3.3 and 4.4 in \cite{CGZ2}, the following three tensor product $\mathcal{G}$-modules are reducible:
	\begin{align*}
		\Omega(\lambda,\sigma_i,\eta_i)\otimes \Omega(\lambda,\sigma_j,\eta_j),\quad
		\Omega(\lambda,\sigma_i,\eta_i)\otimes \Gamma(\lambda,\sigma_j,\eta_j),\quad\mbox{and }\quad
		\Gamma(\lambda,\sigma_i,\eta_i)\otimes \Gamma(\lambda,\sigma_j,\eta_j).
	\end{align*}
	Hence $\boldsymbol{T}(\boldsymbol{m},\boldsymbol{\lambda},\boldsymbol{\sigma},\boldsymbol{\eta},V)$ is a reducible $\mathcal{G}$-module.
\end{proof}
\subsection{Isomorphism Classes}
In this subsection, we shall determine the isomorphism classes of simple tensor product modules $\boldsymbol{T}(\boldsymbol{m},\boldsymbol{\lambda},\boldsymbol{\sigma},\boldsymbol{\eta},V)$.
According to Theorem \ref{simple}, throughout this subsection, we always assume $\lambda_1,\lambda_2,\ldots,\lambda_{m_1+m_2}\in\C^*$ are pairwise distinct.
Recall that in \eqref{NX}, a subspace $N(X,g,p)$ is introduced.

For $0\ne g\in \boldsymbol{T}(\boldsymbol{m},\boldsymbol{\lambda},\boldsymbol{\sigma},\boldsymbol{\eta},V)$, define
\begin{equation*}
	D_g=\lim_{p\to+\infty}\dim_{\C}\left(N(H,g,p)+N(I,g,p)+N(J,g,p)\right)
\end{equation*}
and
\begin{equation*}
	D_{\boldsymbol{T}}=\inf\left\{D_g\mid 0\ne g\in \boldsymbol{T}(\boldsymbol{m},\boldsymbol{\lambda},\boldsymbol{\sigma},\boldsymbol{\eta},V)\right\}.
\end{equation*}
\begin{lemma}
	\label{index}
	For any $0\ne g\in \boldsymbol{T}(\boldsymbol{m},\boldsymbol{\lambda},\boldsymbol{\sigma},\boldsymbol{\eta},V)$, we always have $D_g\ge m_1+m_2+1$, where the equality holds if and only if $g=1\otimes\cdots\otimes1\otimes v$ for some $0\ne v\in V$.
\end{lemma}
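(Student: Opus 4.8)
The plan is to compute, for each $X\in\{H,I,J\}$ and all sufficiently large $p$, the space $N(X,g,p)$ explicitly, and then read off $D_g$ as the dimension of the stabilized sum $W:=N(H,g,p)+N(I,g,p)+N(J,g,p)$. Fix $0\ne g$ and take $p$ large enough that $X_nv_{\boldsymbol p\boldsymbol q\boldsymbol r\boldsymbol s}=0$ for all $n\ge p$ and all coefficients of $g$. For such $n$ the only surviving contributions to $H_ng$ come from the $\Omega$- and $\Gamma$-legs: on the $k$-th $\Omega$-factor $H_n$ multiplies the polynomial part by $t_k$ and substitutes $s_k\mapsto s_k-n$, and on the $j$-th $\Gamma$-factor it multiplies by $y_j$ and substitutes $x_j\mapsto x_j-n$. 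Since $s_k\mapsto s_k-n$ is polynomial in $n$ of degree $P_k$, one gets
\[
H_ng=\sum_{k=1}^{m_1}\lambda_k^{\,n}\sum_{d=0}^{P_k}n^{d}c_{k,d}\,t_k\partial_{s_k}^{d}g+\sum_{j=1}^{m_2}\lambda_{m_1+j}^{\,n}\sum_{d=0}^{R_j}n^{d}c'_{j,d}\,y_j\partial_{x_j}^{d}g
\]
with nonzero scalars $c_{k,d},c'_{j,d}$. As $\lambda_1,\dots,\lambda_{m_1+m_2}$ are pairwise distinct, Lemma \ref{crucial} makes the generalized Vandermonde system in the functions $n^{d}\lambda^{n}$ invertible, so the span of $\{H_ng\mid n\ge p\}$ is exactly $\mathrm{span}_{\C}\{t_k\partial_{s_k}^{d}g,\,y_j\partial_{x_j}^{d}g\}$ and hence $N(H,g,p)=\C g+\mathrm{span}_{\C}\{t_k\partial_{s_k}^{d}g,\,y_j\partial_{x_j}^{d}g\}$. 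The same device applied to $I_n$ (acting only on the $\Omega$-legs, substituting $t_k\mapsto t_k-1$, $s_k\mapsto s_k-n$) and to $J_n$ (acting only on the $\Gamma$-legs, $y_j\mapsto y_j+1$, $x_j\mapsto x_j-n$) gives $N(I,g,p)=\C g+\mathrm{span}_{\C}\{(\partial_{s_k}^{d}g)|_{t_k\mapsto t_k-1}\}$ and $N(J,g,p)=\C g+\mathrm{span}_{\C}\{(\partial_{x_j}^{d}g)|_{y_j\mapsto y_j+1}\}$. These descriptions are independent of $p$ once $p$ is large, so the sum stabilizes and $D_g=\dim_{\C}W$.

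For the lower bound I would exhibit $m_1+m_2+1$ explicit members of $W$, namely $g$ together with $t_1g,\dots,t_{m_1}g,y_1g,\dots,y_{m_2}g$ (the $d=0$ terms of the $H$-computation, already furnished by \eqref{3} and \eqref{4} of Lemma \ref{syy}). Their linear independence is clean: a relation $(\alpha_0+\sum_l\beta_lt_l+\sum_l\gamma_ly_l)g=0$, after writing $g=\sum_a p_a\otimes w_a$ with the $w_a$ linearly independent in $V$ and the $p_a$ polynomials not all zero, forces the scalar polynomial $\alpha_0+\sum\beta_lt_l+\sum\gamma_ly_l$ to annihilate some nonzero $p_a$ in an integral domain, hence to vanish; since $1,t_1,\dots,y_{m_2}$ are linearly independent, all coefficients are $0$. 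Thus $D_g=\dim_{\C}W\ge m_1+m_2+1$, and for $g=1\otimes\cdots\otimes1\otimes v$ one checks directly that every remaining generator collapses onto this list ($g|_{t_k\mapsto t_k-1}=g$, $\partial_{s_k}g=0$, etc.), giving equality.

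For the characterization of equality I would argue as follows. If $\dim_{\C}W=m_1+m_2+1$, then the independent vectors above form a basis of $W$, so every generator lies in $\mathrm{span}_{\C}\{g,t_lg,y_lg\}$. Feeding the $I$-generator $g|_{t_k\mapsto t_k-1}$ into this span and comparing $t_k$-degrees (the shift fixes the top $t_k$-power while multiplication by $t_k$ raises it) forces $g|_{t_k\mapsto t_k-1}=g$, whence $g$ has no $t_k$, because a polynomial fixed by $t_k\mapsto t_k-1$ is constant in $t_k$; symmetrically the $J$-generators remove every $y_j$. Once $g$ involves only the $s$- and $x$-variables, feeding the $H$-generators $t_k\partial_{s_k}g$ and $y_j\partial_{x_j}g$ into the span and matching the now unique degree-one $t_k$- resp.\ $y_j$-part yields $\partial_{s_k}g=\beta_kg$ and $\partial_{x_j}g=\gamma_jg$, which is impossible by $s_k$- resp.\ $x_j$-degree unless $g$ is constant in those variables. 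Hence $g\in 1\otimes\cdots\otimes1\otimes V$, i.e.\ $g=1\otimes\cdots\otimes1\otimes v$ with $v\ne0$.

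The main obstacle is the first paragraph: one must verify that the coefficient vectors extracted by Lemma \ref{crucial} are precisely $t_k\partial_{s_k}^{d}g$, $y_j\partial_{x_j}^{d}g$ and their shifted analogues, i.e.\ that for large $n$ the $n$-dependence of each $X_ng$ is governed solely by the factors $\lambda_k^{\,n}$ and the polynomial substitutions $s_k\mapsto s_k-n$, $x_j\mapsto x_j-n$, while the $V$-leg and the complementary tensor factors contribute nothing new. Everything afterwards reduces to degree comparisons in a polynomial ring tensored with $V$, which are routine once the three spaces are described.
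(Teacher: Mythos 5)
Your proof is correct, and it runs on the same engine as the paper's: for $n$ beyond the annihilation threshold of the $V$-components, each $X_ng$ with $X\in\{H,I,J\}$ expands as a combination of fixed vectors with coefficients of the form $n^d\lambda_k^n$, and the nonvanishing determinant of Lemma \ref{crucial} lets one solve for those vectors inside $N(X,g,p)$; the paper packages exactly this mechanism into Lemma \ref{syy} and the displays \eqref{u_k}, \eqref{w_l}. Where you genuinely diverge is in the organization. The paper never computes the spaces $N(X,g,p)$ in full: it runs a case analysis on the lexicographic degree $(\boldsymbol{i},\boldsymbol{j},\boldsymbol{k},\boldsymbol{l})$ of $g$, and in each nontrivial case produces a single extra element beyond $\{g,c_l,d_l\}$ --- the vector $a_k$ from \eqref{5} when $\boldsymbol{i}\ne\boldsymbol{0}$, its analogue from \eqref{6} when $\boldsymbol{k}\ne\boldsymbol{0}$, the difference $g-u_r$ when $\boldsymbol{j}\ne\boldsymbol{0}$, and its $J$-analogue when $\boldsymbol{l}\ne\boldsymbol{0}$ --- whose degree is strictly smaller than $\deg(g)$; linear independence of the resulting $m_1+m_2+2$ elements then comes for free from the strict chain of degrees, giving $D_g>m_1+m_2+1$ without ever identifying the spaces. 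You instead prove a stronger intermediate statement, namely the exact stabilized descriptions $N(H,g,p)=\C g+\mathrm{span}_{\C}\{t_k\partial_{s_k}^dg,\,y_j\partial_{x_j}^dg\}$ and their $I$- and $J$-counterparts, then obtain the lower bound from the integral-domain argument for $\{g,t_lg,y_lg\}$ (which, as you note, already follows from \eqref{3} and \eqref{4} alone), and settle the equality case by forcing every generator into that $(m_1+m_2+1)$-dimensional span, where invariance under $t_k\mapsto t_k-1$ (resp.\ $y_j\mapsto y_j+1$) kills the $t$- and $y$-dependence and the relation $\partial_{s_k}g=\beta_kg$ (resp.\ $\partial_{x_j}g=\gamma_jg$) kills the $s$- and $x$-dependence. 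Your route costs more bookkeeping up front --- the span equalities you rightly flag as the main obstacle do need the full Taylor-plus-Vandermonde verification --- but it dispenses with the lexicographic order and the four-way case split, and it yields a sharper structural fact (the exact spaces) from which the ``only if'' direction is mechanical polynomial algebra; the paper's route is leaner in context because Lemma \ref{syy} is already in place for Theorem \ref{simple}, and mere membership of spans, rather than their computation, suffices for its degree-comparison argument.
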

\begin{proof}
Suppose we have a non-zero element
\[
g=\sum_{(\boldsymbol{p},\boldsymbol{q},\boldsymbol{r},\boldsymbol{s})\in E}\boldsymbol{s}^{\boldsymbol{p}}\boldsymbol{t}^{\boldsymbol{q}}\otimes\boldsymbol{x}^{\boldsymbol{r}}\boldsymbol{y}^{\boldsymbol{s}}\otimes v_{\boldsymbol{p}\boldsymbol{q}\boldsymbol{r}\boldsymbol{s}}\in \boldsymbol{T}(\boldsymbol{m},\boldsymbol{\lambda},\boldsymbol{\sigma},\boldsymbol{\eta},V)
\]	
and $\deg(g)=(\boldsymbol{i},\boldsymbol{j},\boldsymbol{k},\boldsymbol{l})$.
Choose $s\in\N$ such that $H_nv_{\boldsymbol{p}\boldsymbol{q}\boldsymbol{r}\boldsymbol{s}}=I_nv_{\boldsymbol{p}\boldsymbol{q}\boldsymbol{r}\boldsymbol{s}}=J_nv_{\boldsymbol{p}\boldsymbol{q}\boldsymbol{r}\boldsymbol{s}}=0$ for all $n\ge s$ and $(\boldsymbol{p},\boldsymbol{q},\boldsymbol{r},\boldsymbol{s})\in E$.
Then by \eqref{3} and \eqref{4} in Lemma \ref{syy}, for $i\ge s$ and $1\le l\le \max(m_1,m_2)$, we see that
$$c_l=\sum_{(\boldsymbol{p},\boldsymbol{q},\boldsymbol{r},\boldsymbol{s})\in E}\boldsymbol{s}^{\boldsymbol{p}}\boldsymbol{t}^{\boldsymbol{q}+e_l}\otimes\boldsymbol{x}^{\boldsymbol{r}}\boldsymbol{y}^{\boldsymbol{s}}\otimes v_{\boldsymbol{p}\boldsymbol{q}\boldsymbol{r}\boldsymbol{s}}$$
and
$$d_l=\sum_{(\boldsymbol{p},\boldsymbol{q},\boldsymbol{r},\boldsymbol{s})\in E}\boldsymbol{s}^{\boldsymbol{p}}\boldsymbol{t}^{\boldsymbol{q}}\otimes\boldsymbol{x}^{\boldsymbol{r}}\boldsymbol{y}^{\boldsymbol{s}+\epsilon_l}\otimes v_{\boldsymbol{p}\boldsymbol{q}\boldsymbol{r}\boldsymbol{s}}$$
belong to $N(H,g,i)$.

If $\boldsymbol{i}\ne\boldsymbol{0}$, let $k=\min\left\{q\mid i_q\ne 0\right\}$, then by \eqref{5}, for $i\ge s$, we have
\[
a_k=\sum_{\substack{(\boldsymbol{p},\boldsymbol{q},\boldsymbol{r},\boldsymbol{s})\in E \\ p_k=P_k }}\boldsymbol{s}^{\boldsymbol{p}-P_ke_k}\boldsymbol{t}^{\boldsymbol{q}+e_k}\otimes\boldsymbol{x}^{\boldsymbol{r}}\boldsymbol{y}^{\boldsymbol{s}}\otimes v_{\boldsymbol{p}\boldsymbol{q}\boldsymbol{r}\boldsymbol{s}}\in N(H,g,i).
\]
Now, we have
\[
\deg(a_k)\prec\deg(g)\prec\deg(d_{m_2})\prec\cdots\prec\deg(d_{1})\prec\deg(c_{m_1})\prec\cdots\prec\deg(c_1),
\]
and hence the dimension of the subspace of $N(H,g,i)$ spanned by $$\left\{a_k,g,c_l,d_l\mid 1\le l\le \max(m_1,m_2)\right\}$$ is greater than $m_1+m_2+1$.
It follows that $D_g> m_1+m_2+1$.
Similarly, if $\boldsymbol{i}=\boldsymbol{0}$ but $\boldsymbol{k}\ne\boldsymbol{0}$, then we also have $D_g> m_1+m_2+1$.

Now, suppose $\boldsymbol{i}=\boldsymbol{0}$, $\boldsymbol{k}=\boldsymbol{0}$, and $\boldsymbol{j}\ne\boldsymbol{0}$, let $r=\min\left\{q\mid j_q\ne0\right\}$. 
Then we have
\[
g=\sum_{(\boldsymbol{0},\boldsymbol{q},\boldsymbol{0},\boldsymbol{s})\in E}1^{\otimes(r-1)}\otimes t_r^{q_r}\otimes\cdots\otimes t_{m_1}^{q_{m_1}}\otimes\boldsymbol{y}^{\boldsymbol{s}}\otimes v_{\boldsymbol{0}\boldsymbol{q}\boldsymbol{0}\boldsymbol{s}}.
\]
By \eqref{u_k}, for $i\ge s$, we have
\[
u_{r}=\sum_{(\boldsymbol{0},\boldsymbol{q},\boldsymbol{0},\boldsymbol{s})\in E} 1^{\otimes(r-1)}\otimes (t_r-1)^{q_r}\otimes\cdots\otimes t_{m_1}^{q_{m_1}}\otimes\boldsymbol{y}^{\boldsymbol{s}}\otimes v_{\boldsymbol{0}\boldsymbol{q}\boldsymbol{0}\boldsymbol{s}}\in N(I,g,i)
\]
and $g\ne u_r$.
Note that
\[
\deg(g-u_r)\prec\deg(g)\prec\deg(d_{m_2})\prec\cdots\prec\deg(d_{1})\prec\deg(c_{m_1})\prec\cdots\prec\deg(c_1),
\]
it follows that $D_g>m_1+m_2+1$.
Similarly, if $\boldsymbol{i}=\boldsymbol{j}=\boldsymbol{0}$, $\boldsymbol{k}=\boldsymbol{0}$ but $\boldsymbol{l}\ne\boldsymbol{0}$, we also have $D_g>m_1+m_2+1$.

Finally, if $(\boldsymbol{i},\boldsymbol{j},\boldsymbol{k},\boldsymbol{l})=(\boldsymbol{0},\boldsymbol{0},\boldsymbol{0},\boldsymbol{0})$, then for $i\ge s$, we have
\begin{align*}
	N(H,g,i)=\mathrm{span}_{\C}\left\{g,c_l,d_l\mid 1\le l\le \max(m_1,m_2)\right\},\quad N(I,g,i)=N(J,g,i)=\C g,
\end{align*}
and hence $D_g=m_1+m_2+1$.
\end{proof}

Now, suppose
\[
\boldsymbol{T}(\boldsymbol{n},\boldsymbol{\mu},\boldsymbol{\tau},\boldsymbol{\gamma},W)=\left(\bigotimes_{k=1}^{n_1}\Omega(\mu_k,\tau_k,\gamma_k)\right)\otimes\left(\bigotimes_{k=n_1+1}^{n_1+n_2}\Gamma(\mu_k,\tau_k,\gamma_k)\right)\otimes W
\]
is another simple tensor product $\mathcal{G}$-module, where $\boldsymbol{n}=(n_1,n_2)\in\N^2$, $\boldsymbol{\mu}=(\mu_1,\mu_2,\ldots,\mu_{n_1+n_2}),\,\boldsymbol{\tau}=(\tau_1,\tau_2,\ldots,\tau_{n_1+n_2})\in(\C^*)^{n_1+n_2}$, $\boldsymbol{\gamma}=(\gamma_1,\gamma_2,\ldots,\gamma_{n_1+n_2})\in\C^{n_1+n_2}$, and $W$ is a simple restricted $\mathcal{G}$-module.
As vector spaces, we have
\[
\Omega(\mu_k,\tau_k,\gamma_k)=\C[s_k,t_k]
\] 
for $1\le k\le n_1$ and \[
\Gamma(\mu_k,\tau_k,\gamma_k)=\C[x_{k-n_1},y_{k-n_1}]
\]
for $n_1+1\le k\le n_1+n_2$.

The \textbf{main result} of this subsection is as follows.
\begin{thm}
	\label{iso}
	The simple tensor product modules
	\[
	\boldsymbol{T}(\boldsymbol{m},\boldsymbol{\lambda},\boldsymbol{\sigma},\boldsymbol{\eta},V)\cong\boldsymbol{T}(\boldsymbol{n},\boldsymbol{\mu},\boldsymbol{\tau},\boldsymbol{\gamma},W)
	\]
	if and only if $\boldsymbol{m}=\boldsymbol{n}$, $V\cong W$ as $\mathcal{G}$-modules, and 
	\begin{align*}
	\left\{(\lambda_k,\sigma_k,\eta_k)\mid 1\le k\le m_1\right\}&=\left\{(\mu_k,\tau_k,\gamma_k)\mid 1\le k\le m_1\right\},\\
	\left\{(\lambda_k,\sigma_k,\eta_k)\mid m_1+1\le k\le m_1+m_2\right\}&=\left\{(\mu_k,\tau_k,\gamma_k)\mid m_1+1\le k\le m_1+m_2\right\}.
	\end{align*}
\end{thm}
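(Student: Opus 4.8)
The overall plan is to treat the two implications separately, with the substance concentrated in the ``only if'' direction.

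For the ``if'' direction, assume $\boldsymbol m=\boldsymbol n$, $V\cong W$, and the two displayed set-equalities. Then I would permute the first $m_1$ tensor factors among themselves and the last $m_2$ among themselves so that $(\mu_k,\tau_k,\gamma_k)=(\lambda_k,\sigma_k,\eta_k)$ for every $k$; this is possible precisely because the $\Omega$-triples agree as sets and the $\Gamma$-triples agree as sets. Since reordering tensor factors yields an isomorphic module (as recalled in Section~2) and $V\cong W$, tensoring the factorwise isomorphisms gives the desired isomorphism.

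For the ``only if'' direction, fix an isomorphism $\phi$. The first step is to note that $D_g$ is an isomorphism invariant: from $\phi(X_ng)=X_n\phi(g)$ we get $\phi(N(X,g,p))=N(X,\phi(g),p)$, hence $D_g=D_{\phi(g)}$ and $D_{\boldsymbol T(\boldsymbol m,\ldots)}=D_{\boldsymbol T(\boldsymbol n,\ldots)}$. By Lemma~\ref{index} this forces $m_1+m_2=n_1+n_2=:N$ and shows that $\phi$ restricts to a bijection between the minimal sets $\{1^{\otimes N}\otimes v\}$ and $\{1^{\otimes N}\otimes w\}$. I would then define $\psi\colon V\to W$ by $\phi(1^{\otimes N}\otimes v)=1^{\otimes N}\otimes\psi(v)$; linearity and bijectivity of $\psi$ are immediate, and $\psi$ is the candidate $\mathcal G$-isomorphism.

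The second step is to recover the parameters by acting on the generator $h=1^{\otimes N}\otimes v$ and comparing coefficients under $\phi$. Acting with $I_n$ and $J_n$ (for $n$ large, so the $V$- and $W$-contributions vanish) gives $I_nh=\big(\sum_{k=1}^{m_1}\sigma_k\lambda_k^n\big)h$ and $J_nh=\big(\sum_{k=m_1+1}^{m_1+m_2}\sigma_k\lambda_k^n\big)h$; matching scalars against the target yields $\sum_{k=1}^{m_1}\sigma_k\lambda_k^n=\sum_{k=1}^{n_1}\tau_k\mu_k^n$ together with the analogous $\Gamma$-identity, for all large $n$. Since the $\lambda_k$ are distinct, linear independence of the sequences $\lambda^n$ (Lemma~\ref{crucial}) forces $m_1=n_1$, $m_2=n_2$, and the equality of the $\Omega$- and $\Gamma$-blocks of $(\lambda,\sigma)$-pairs as sets. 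Next, acting with $H_n$ and $L_n$ produces $H_nh=\sum_{k}\lambda_k^n C_k$ and $L_nh=\sum_k\lambda_k^n(A_k+n(C_k+\eta_k h))$, where $C_k,A_k$ are the degree-one elements obtained by inserting $t_k$ (or $y_k$) and $s_k$ (or $x_k$) in the $k$-th slot. Linear independence of $\{\lambda_j^n\}$ and of $\{\lambda_j^n,n\lambda_j^n\}$ (again Lemma~\ref{crucial}) lets me extract, factor by factor, the identities $\phi(C_k)=C'_k$, $\phi(A_k)=A'_k$, and $\phi(C_k+\eta_kh)=C'_k+\gamma_k h'$; subtracting gives $\eta_k h'=\gamma_k h'$, hence $\eta_k=\gamma_k$. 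Combined with the $(\lambda,\sigma)$-matching, this yields both set-equalities of triples in the statement.

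The final step is to verify that $\psi$ is a $\mathcal G$-module map. After reordering the target factors so that all parameters align, I would apply $\phi$ to the all-$n$ identities $X_nh=(\text{degree-}{\ge}1\text{ terms})+1^{\otimes N}\otimes X_nv$ for each $X\in\{L,H,I,J\}$. The degree-$\ge1$ terms, as well as the scalar contributions of degree zero, cancel against $X_nh'$ because the elements they involve ($C_k,A_k,h$) are matched by $\phi$ through the identities of the previous step and the parameters now coincide; what remains is $\phi(1^{\otimes N}\otimes X_nv)=1^{\otimes N}\otimes X_n\psi(v)$, i.e. $\psi(X_nv)=X_n\psi(v)$ for all $n$. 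Hence $\psi$ intertwines all generators and $V\cong W$, completing the direction.

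I expect the main obstacle to be the bookkeeping in the second and fourth steps: justifying that one may ``read off'' the coefficients of the exponential-polynomial expressions $\sum_j(A_j+nB_j)\lambda_j^n$, which rests entirely on the linear independence supplied by Lemma~\ref{crucial}, and then carefully propagating the factorwise identities $\phi(C_k)=C'_k$ from large $n$ (where the restricted-module contributions vanish) to all $n\in\Z$, so that the cancellation establishing the intertwining property is valid.
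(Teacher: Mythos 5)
Your proposal is correct and follows essentially the same route as the paper: both use Lemma \ref{index} (the invariant $D_g$) to force $m_1+m_2=n_1+n_2$ and to see that $\phi$ matches $1^{\otimes N}\otimes v$ with $1^{\otimes N}\otimes w$, then extract $m_1=n_1$ and the $(\lambda_k,\sigma_k)$-sets from the $I_n$, $J_n$ actions via Vandermonde, the $\eta_k=\gamma_k$ equalities from the $H_n$, $L_n$ actions via Lemma \ref{crucial}, and finally conclude $V\cong W$ through the induced map $v\mapsto w$. The only difference is presentational: you spell out the intertwining check for $\psi$ (and the easy ``if'' direction) that the paper dismisses as straightforward, and your resolution of that step is valid since the factorwise identities $\phi(C_k)=C_k'$, $\phi(A_k)=A_k'$ are statements about fixed elements, so they apply for every $n\in\Z$ once established from large $n$.
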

\begin{proof}
	Suppose $\phi:\boldsymbol{T}(\boldsymbol{m},\boldsymbol{\lambda},\boldsymbol{\sigma},\boldsymbol{\eta},V)\to\boldsymbol{T}(\boldsymbol{n},\boldsymbol{\mu},\boldsymbol{\tau},\boldsymbol{\gamma},W)$ is an isomorphism of $\mathcal{G}$-modules.
	Then, by Lemma \ref{index}, we have $m_1+m_2=n_1+n_2$.
	Also, for any $0\ne v\in V$, by Lemma \ref{index}, there exists a non-zero $w\in W$ (which certainly depends only on $v$) such that
	\[
	\phi(1\otimes\cdots\otimes1\otimes v)=1\otimes\cdots\otimes1\otimes w.
	\]
	Choose $s\in\N$ such that $\mathcal{G}_nv=0$ and $\mathcal{G}_nw=0$ for $n\ge s$. 
	From the equation
	\[
	\phi\left(I_n(1\otimes\cdots\otimes1\otimes v)\right)=I_n(1\otimes\cdots\otimes1\otimes w),
	\]
	we deduce that
	\[
	\sum_{k=1}^{m_1}\lambda_k^{n}\sigma_k-\sum_{k=1}^{n_1}\mu_k^{n}\tau_k=0.
	\]
	Note that $\sigma_k,\tau_k\in\C^*$, it follows from the Vandermonde determinant that $m_1=n_1$ and $(\lambda_k,\sigma_k)=(\mu_k,\tau_k)$ for $1\le k\le m_1$ up to a permutation.
	Similarly, the action of $J_n$ ensures that $(\lambda_k,\sigma_k)=(\mu_k,\tau_k)$ for $m_1+1\le k\le m_1+m_2$ up to a permutation.
	By the equation
	\[
	\phi\left(H_n(1\otimes\cdots\otimes1\otimes v)\right)=H_n(1\otimes\cdots\otimes1\otimes w),
	\]
	we have
	\begin{equation}
		\label{tt}
	\phi(1\otimes\cdots\otimes t_k\otimes\cdots\otimes1\otimes1^{\otimes m_2}\otimes v)=1\otimes\cdots\otimes t_k\otimes\cdots\otimes1\otimes1^{\otimes m_2}\otimes w
	\end{equation}
	and
	\begin{equation}
		\label{yy}
	\phi(1^{\otimes m_1}\otimes1\otimes\cdots\otimes y_k\otimes\cdots\otimes1\otimes v)=1^{\otimes m_1}\otimes1\otimes\cdots\otimes y_k\otimes\cdots\otimes1\otimes w.
	\end{equation}
	for $1\le k\le \max(m_1,m_2)$.
	
	The equation
	\[
	\phi\left(L_n(1\otimes\cdots\otimes1\otimes v)\right)=L_n(1\otimes\cdots\otimes1\otimes w)
	\]
	implies the following summation
	\begin{equation}
		\label{huge}
		\begin{split}
			&\sum_{k=1}^{m_1}\lambda_k^n\left(\phi(1\otimes\cdots\otimes s_k\otimes\cdots\otimes1\otimes1^{\otimes m_2}\otimes v)-1\otimes\cdots\otimes s_k\otimes\cdots\otimes1\otimes1^{\otimes m_2}\otimes w\right)\\
			+&\sum_{k=m_1+1}^{m_1+m_2}\lambda_k^{n}\left(\phi(1^{\otimes m_1}\otimes1\otimes\cdots\otimes x_{k-m_1}\otimes\cdots\otimes1\otimes v)-1^{\otimes m_1}\otimes1\otimes\cdots\otimes x_{k-m_1}\otimes\cdots\otimes1\otimes w\right)\\
			+&\sum_{k=1}^{m_1}n\lambda_k^n\left(\phi(1\otimes\cdots\otimes(t_k+\eta_k)\otimes\cdots\otimes1\otimes1^{\otimes m_2}\otimes v)-1\otimes\cdots\otimes(t_k+\gamma_k)\otimes\cdots\otimes1\otimes1^{\otimes m_2}\otimes w\right)\\
			+&\sum_{k=m_1+1}^{m_1+m_2}n\lambda_k^{n}\left(\phi(1^{\otimes m_1}\otimes1\otimes\cdots\otimes(y_{k-m_1}+\eta_{k})\otimes\cdots\otimes1\otimes v)\right.\\
			&\quad\quad\quad\quad\quad\quad\quad\left.-1^{\otimes m_1}\otimes1\otimes\cdots\otimes(y_{k-m_1}+\gamma_k)\otimes\cdots\otimes1\otimes w\right)
		\end{split}
	\end{equation}
	is equal to zero.
	Thus, Lemma \ref{crucial} indicates 
	$$\phi(1\otimes\cdots\otimes(t_k+\eta_k)\otimes\cdots\otimes1\otimes1^{\otimes m_2}\otimes v)=1\otimes\cdots\otimes(t_k+\gamma_k)\otimes\cdots\otimes1\otimes1^{\otimes m_2}\otimes w$$
	for $1\le k\le m_1$ and
	$$\phi(1^{\otimes m_1}\otimes1\otimes\cdots\otimes(y_{k-m_1}+\eta_{k})\otimes\cdots\otimes1\otimes v)=1^{\otimes m_1}\otimes1\otimes\cdots\otimes(y_{k-m_1}+\gamma_k)\otimes\cdots\otimes1\otimes w$$
	for $m_1+1\le k\le m_1+m_2$.
	Thus, from Equations \eqref{tt} and \eqref{yy}, we deduce that $\eta_{k}=\gamma_{k}$ for $1\le k\le m_1+m_2$.
	Now, the summation in \eqref{huge} implies
	\begin{equation}
		\label{ss}
	\phi(1\otimes\cdots\otimes s_k\otimes\cdots\otimes1\otimes1^{\otimes m_2}\otimes v)=1\otimes\cdots\otimes s_k\otimes\cdots\otimes1\otimes1^{\otimes m_2}\otimes w
	\end{equation}
	and
	\begin{equation}
		\label{xx}
	\phi(1^{\otimes m_1}\otimes1\otimes\cdots\otimes x_{k}\otimes\cdots\otimes1\otimes v)=1^{\otimes m_1}\otimes1\otimes\cdots\otimes x_{k}\otimes\cdots\otimes1\otimes w
	\end{equation}
	for $1\le k\le \max(m_1,m_2)$.
	Combining Equations \eqref{tt}, \eqref{yy}, \eqref{ss} and \eqref{xx}, it is straightforward to verify the map that sends $v\in V$ to $w\in W$ is an isomorphism of $\mathcal{G}$-modules.
\end{proof}
We conclude this paper with the following remarks.
\begin{rmk}
	\begin{enumerate}[label=(\arabic*)]
	\item
	Evidently, the simple tensor product $\mathcal{G}$-module $\boldsymbol{T}(\boldsymbol{m},\boldsymbol{\lambda},\boldsymbol{\sigma},\boldsymbol{\eta},V)$ constructed in \eqref{tp} is neither a weight module nor a restricted module, which implies the simple modules constructed in this paper are non-isomorphic to that in \cite{A,AK,GG,CY,CYY,LYZ}.
	\item
	If $m_1+m_2=1$ and $V=\C$ is the trivial module, then $\boldsymbol{T}(\boldsymbol{m},\boldsymbol{\lambda},\boldsymbol{\sigma},\boldsymbol{\eta},V)$ coincides with the modules in \cite{CGZ1}.
	If $m_1+m_2=2$ and $V=\C$ is the trivial module, then $\boldsymbol{T}(\boldsymbol{m},\boldsymbol{\lambda},\boldsymbol{\sigma},\boldsymbol{\eta},V)$ coincides with the modules in \cite{CGZ2}.
	Except for these situations, to our knowledge, the simple module $\boldsymbol{T}(\boldsymbol{m},\boldsymbol{\lambda},\boldsymbol{\sigma},\boldsymbol{\eta},V)$ has not appeared in previous literature.
\end{enumerate}
\end{rmk}

\section*{Acknowledgements}
	This paper is partially supported by the National Key R\&D Program of China (2024YFA1013802), NSF of China (11931009, 12101152, 12161141001, 12171132, 12401030, and 12401036), Innovation Program for Quantum Science and Technology (2021ZD0302902), and the Fundamental Research Funds for the Central Universities of China. 

\end{document}